\theoremstyle{plain}
\newtheorem{theorem}{Theorem}[section]
\newtheorem{proposition}[theorem]{Proposition}
\newtheorem{corollary}[theorem]{Corollary}
\theoremstyle{definition}
\newtheorem{definition}[theorem]{Definition}
\renewenvironment{proof}[1][\proofname]{\par
  \normalfont
  \topsep6\p@\@plus6\p@ \trivlist
  \item[\hskip\labelsep{\bfseries #1}\@addpunct{\bfseries.}]\ignorespaces
}{%
  \endtrivlist
}
\renewcommand{\proofname}{proof}
\theoremstyle{remark}
\newtheorem{remark}[theorem]{Remark}
\numberwithin{equation}{section}
\title{Non-unital algebra objects of stable symmetric monoidal model categories by Smith ideal theory}
\date{\today} 
\author{Yuki Kato}
\address{National institute of technology, Ube college, 
	      2-14-1, Tokiwadai, Ube, Yamaguchi, JAPAN 755-8555.}
\email{ykato@ube-k.ac.jp}
\keywords{Symmetric monoidal model categories, Smith ideals, Non-unital algebras}
\newcommand{\Alg}{\mathrm{Alg}}
\newcommand{\Fun}{\mathrm{Fun}}
\def\qed{{\hfill $\Box$}}
\begin{document}
\thispagestyle{empty}
\begin{abstract}
This note remarks that the correspondence between  non-unital
algebras and augmented unital algebras can be derived from Hovey's Smith
ideal theory.  Applying Smith ideal theory of stable symmetric monoidal
model category, we formulate non-unital  algebra objects of
stable symmetric monoidal model categories and generalize the
correspondence between non-unital algebra objects and
augmented algebra objects.
\end{abstract}

\maketitle
\section{Introduction} 
\label{sec-Introduction} A ring without the assumption existence of a
multiplicative unit $1$ is called a {\it non-unital} ring. Any unital
ring is regarded as a non-unital ring by forgetting the existence of
$1$. This paper fixes a base unital ring $V$ and let $\Alg^{\rm
nu}_{V}$ denote the category of non-unital $V$-algebras.  An {\it
augmentation} $\varepsilon_A: A \to V$ is a ring homomorphism that the
unit homomorphism $V \to A$ is a section of $\varepsilon_A$, and a
$V$-algebra with an augmentation is called an {\it augmented
$V$-algebra}, denoting $\Alg_{V/ / V}$ the category of augmented
$V$-algebras. It is well-known that there exists a categorical
equivalence between $\Alg^{\rm nu}_V$ and $\Alg_{V/ / V}$: For any  $V$-algebra $A$, the direct sum $V \oplus A$ has a canonical
unital ring structure defined by
\[
 (m,\,a) \cdot (n,\,b) = (mn,\, na+mb+ab)  
\]
for $m, \, n \in V$ and $a,\,b \in A$.  It is easily
checked that the functor the adjunction
\[
  V \oplus (-) :  \Alg^{\rm nu}_{V} \rightleftarrows  \Alg_{V/ / V}: \mathrm{Ker} (\epsilon_{(-)} : - \to V)
\] 
is a pair of categorical equivalences, where $\mathrm{Ker} (\varepsilon_{(-)}: - \to V)$ is the kernel functor of augmentations. 

The main result of this paper is a model categorical analogue of the
above categorical equivalence (Theorem~\ref{mainThm}): A {\it stable}
model category is a model category whose homotopy category is
triangulated. We generalize the definition of non-unital 
algebras for stable symmetric monoidal model categories by using Hovey's
Smith ideal theory~\cite[Theorem 4.3]{Smith-ideals}.  An advantage of
Smith ideal theory of stable symmetric monoidal model categories is that
it enables us to clarify that a symmetric monoidal model structure of
non-unital algebra objects is the push-out products induced by ones.

\subsubsection*{Acknowledgements}
The author was supported by Grants-in-Aid for Scientific Research No.23K03080, Japan Society for the Promotion of Science.

\section{Smith ideal theory of symmetric monoidal model categories}
\label{sec:Smith} 

We explain Hovey~\cite{Smith-ideals}'s Smith ideal theory of symmetric
monoidal categories. This section assumes that categories are always
pointed, and $0$ denotes the zero object.
\subsection{Smith ideal theory of pointed categories}
For any symmetric monoidal category $\mathcal{C}$, let $\Alg(\mathcal{C})$
denote the subcategory of $\mathcal{C}$ spanned by  monoid
objects in the sense of MacLane~\cite{zbMATH00195199}.

Let $[1]$ denote the category with two objects $0$ and $1$,
and only one non-identity morphism $0 \to 1$. For any pointed category
$\mathcal{C}$, the diagram category $\Fun([1],\,\mathcal{C})$ is called
the arrow category, and $\mathrm{Ar}(\mathcal{C})$ denotes it.  The symmetric monoidal structure of $\mathcal{C}$ inherits $\mathrm{Ar}(\mathcal{C})$ two symmetric monoidal structures: For any two morphisms in
$\mathcal{C}$, $f:X_0 \to X_1$ and $g: Y_0 \to Y_1$, the {\it tensor
product monoidal structure} is defined by $f \otimes g$ as the
composition
\[
f \otimes g : X_0 \otimes Y_0 \to X_1 \otimes Y_1,
\] 
and another is the {\it push-out product monoidal structure} defined by
the induced morphism:
\[
 f \Box g: ( X_0 \otimes Y_1   ) \amalg_{ X_0 \otimes Y_0 } (   X_1 \otimes Y_0 ) \to  X_1 \otimes Y_1. 
\]
We let $\mathrm{Ar}^\otimes (\mathcal{C})$ the arrow category with the tensor product monoidal structure and $\mathrm{Ar}^\Box (\mathcal{C})$ with the push-out monoidal structure. 
Hovey proved the cokernel functor is strongly monoidal and admits a lax monoidal right adjoint:
\begin{theorem}[\cite{Smith-ideals} Theorem 1.4]
\label{HoveySmith} Let $\mathcal{C}$ be a pointed closed symmetric
monoidal category. Then the functor
\[
\mathrm{cok} : \mathrm{Ar}^\Box(\mathcal{C}) \ni (f:X \to Y)   \mapsto (\mathrm{cok}(f ): Y \to \mathrm{Coker}(f)) \in 
\mathrm{Ar}^\otimes(\mathcal{C})
\]
is strongly symmetric monoidal, and its right adjoint is the kernel functor $\mathrm{ker}: (f: X \to Y) \mapsto (\mathrm{ker}(f): \mathrm{Ker}(f) \to X)$.
 \qed 
\end{theorem}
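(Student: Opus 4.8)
The plan is to establish the two assertions separately --- first the adjunction $\mathrm{cok}\dashv\mathrm{ker}$ between the underlying arrow categories (ignoring the monoidal structures), then the strong symmetric monoidal structure on $\mathrm{cok}$ --- and afterwards to observe that, since the right adjoint of a strong symmetric monoidal functor inherits a canonical lax symmetric monoidal structure, $\mathrm{ker}$ is automatically lax symmetric monoidal.

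For the adjunction I would use the presentation of a morphism in $\mathrm{Ar}(\mathcal{C})$ as a commuting square. Fix $f\colon X_0\to X_1$ and $g\colon A\to B$. A morphism $\mathrm{cok}(f)\to g$ is a pair $\phi_1\colon X_1\to A$, $\phi\colon\mathrm{Coker}(f)\to B$ with $g\phi_1=\phi\circ\mathrm{cok}(f)$; composing with $f$ gives $g\phi_1 f=\phi\circ\mathrm{cok}(f)\circ f=0$, so $\phi_1 f$ factors uniquely through $\mathrm{ker}(g)$, and this factorization together with $\phi_1$ is exactly a morphism $f\to\mathrm{ker}(g)$. Conversely a morphism $f\to\mathrm{ker}(g)$, given by $\psi_0\colon X_0\to\mathrm{Ker}(g)$ and $\psi_1\colon X_1\to A$ with $\mathrm{ker}(g)\circ\psi_0=\psi_1 f$, satisfies $g\psi_1 f=g\circ\mathrm{ker}(g)\circ\psi_0=0$, so $g\psi_1$ factors uniquely through $\mathrm{cok}(f)$. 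These two assignments are mutually inverse and, by the uniqueness clauses in the universal properties of kernel and cokernel, natural in $f$ and in $g$; hence $\mathrm{cok}\dashv\mathrm{ker}$.

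The monoidal content is a coherent natural isomorphism $\mathrm{cok}(f\Box g)\cong\mathrm{cok}(f)\otimes\mathrm{cok}(g)$ in $\mathrm{Ar}^\otimes(\mathcal{C})$ together with preservation of the monoidal unit. I would first record the standard identification of the total cofiber of a commuting square with the iterated cofiber of its rows: for a commuting square with corners $A\to B$, $A\to C$, $B\to D$, $C\to D$, the cokernel of the induced map $B\amalg_A C\to D$ is naturally isomorphic to $\mathrm{Coker}\bigl(\mathrm{Coker}(A\to B)\to\mathrm{Coker}(C\to D)\bigr)$, which is an instance of the interchange of colimits. Applying this to the square with corners $X_0\otimes Y_0$, $X_1\otimes Y_0$, $X_0\otimes Y_1$, $X_1\otimes Y_1$ --- whose push-out corner is precisely the domain of $f\Box g$ and whose terminal corner is $X_1\otimes Y_1$ --- identifies $\mathrm{Coker}(f\Box g)$ with $\mathrm{Coker}\bigl(\mathrm{Coker}(f\otimes Y_0)\to\mathrm{Coker}(f\otimes Y_1)\bigr)$. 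Because $\mathcal{C}$ is closed, each functor $(-)\otimes Y_i$ is a left adjoint and hence preserves cokernels, so $\mathrm{Coker}(f\otimes Y_i)\cong\mathrm{Coker}(f)\otimes Y_i$ with induced map $\mathrm{Coker}(f)\otimes g$; applying closedness once more in the remaining variable yields $\mathrm{Coker}(f\Box g)\cong\mathrm{Coker}(f)\otimes\mathrm{Coker}(g)$. Chasing the canonical projection $X_1\otimes Y_1\to\mathrm{Coker}(f\Box g)$ through these identifications exhibits it as the arrow $\mathrm{cok}(f)\otimes\mathrm{cok}(g)$, so the isomorphism holds in $\mathrm{Ar}^\otimes(\mathcal{C})$, and naturality in $f$ and $g$ is inherited from that of $\mathrm{Coker}$ and of the total-cofiber identification. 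For the unit, the monoidal unit of $\mathrm{Ar}^\Box(\mathcal{C})$ is the arrow $0\to\mathbf{1}$ --- indeed $(0\to\mathbf{1})\Box f=f$, since $(-)\otimes Y$ preserves the initial object --- and $\mathrm{cok}(0\to\mathbf{1})=\mathrm{id}_{\mathbf{1}}$, which is the monoidal unit of $\mathrm{Ar}^\otimes(\mathcal{C})$.

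Finally I would verify the associativity, unit, and symmetry coherence diagrams for the structure isomorphism just constructed; these reduce to the coherence of $\otimes$ in $\mathcal{C}$ together with uniqueness in the relevant universal properties, and are routine. This establishes that $\mathrm{cok}$ is strong symmetric monoidal, which together with $\mathrm{cok}\dashv\mathrm{ker}$ proves the theorem (and, as noted, equips $\mathrm{ker}$ with a lax symmetric monoidal structure). The step I expect to be the main obstacle is the total-cofiber interchange: one has to arrange the colimit bookkeeping carefully enough that the isomorphism $\mathrm{Coker}(f\Box g)\cong\mathrm{Coker}(f)\otimes\mathrm{Coker}(g)$ is genuinely natural in both arguments and compatible with the canonical maps out of $X_1\otimes Y_1$; the remaining verifications are diagram chases.
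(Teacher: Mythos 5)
Here there is nothing in the paper to compare against: Theorem~\ref{HoveySmith} is quoted from Hovey (\cite{Smith-ideals}, Theorem 1.4) with no proof given, and your argument is a correct reconstruction of the standard one --- the adjunction from the universal properties of kernel and cokernel (with $\mathrm{cok}(f)$ epi and $\mathrm{ker}(g)$ mono making the two assignments mutually inverse), strong symmetric monoidality from the total-cofiber/colimit-interchange identification $\mathrm{Coker}(f\Box g)\cong\mathrm{Coker}(f)\otimes\mathrm{Coker}(g)$ using closedness so that $-\otimes Y$ preserves cokernels, and the unit $0\to\mathbf{1}$ of the pushout-product structure going to $\mathrm{id}_{\mathbf{1}}$, after which $\mathrm{ker}$ is lax symmetric monoidal by doctrinal adjunction. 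The only caveat is the implicit hypothesis, present in Hovey's setting (he assumes bicompleteness), that $\mathcal{C}$ has the finite limits and colimits needed for $\Box$, $\mathrm{cok}$ and $\mathrm{ker}$ to be defined at all.
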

Hovey mentioned in \cite{Smith-ideals}, in general, the kernel functor $\mathrm{ker}: \mathrm{Ar}(\mathcal{C}) \to \mathrm{Ar}(\mathcal{C})$ is lax symmetric monoidal: A canonical functor 
\[
  \mathrm{ker}(f) \Box \mathrm{Ker}(g) \to \mathrm{ker}(f \otimes g)
\]
is induced by the canonical morphism $\mathrm{cok}( \mathrm{ker}(f) \Box \mathrm{Ker}(g)) \simeq \mathrm{cok}( \mathrm{ker}(f) )\otimes \mathrm{cok}( \mathrm{ker}(g) ) \to f \otimes g$ for any $f,\, g \in \mathrm{Ar}^\otimes(\mathcal{C})$. 
Hence those  functors  $\mathrm{cok}$ and $\mathrm{ker}$ send monoid objects to monoid objects. A {\it Smith ideal} is a monoid object of the symmetric monoidal category $\mathrm{Ar}(\mathcal{C})$ with respect to the push-out product monoidal
structure. 

\subsection{Definition of  non-unital algebra objects of symmetric monoidal categories}

For any pointed closed symmetric monoidal category $\mathcal{C}$ with a monoidal unit $V$, let $\Alg(\mathcal{C})_{V//V}$ denote the full subcategory of
$\mathrm{Ar}^\otimes(\mathcal{C})$ spanned by augmentations of algebra objects of the tensor product monoidal structure.

Let $\mathcal{C}$ be a pointed closed symmetric monoidal category and
$\mathrm{Ar}^{\rm im}(\mathcal{C})$ denotes the full subcategory spanned
those objects $f:R \to S$ such that the unit $f \to \mathrm{ker} (
\mathrm{cok}(f))$ is an isomorphism. Then the cokernel functor
$\mathrm{Ar}^{\rm im}(\mathcal{C}) \to \mathrm{Ar}^{\rm
coim}(\mathcal{C})$ is a categorical equivalence.

\begin{proposition}
\label{stabilization} Let $\mathcal{C}$ be a locally presentable abelian
symmetric monoidal category. Then the arrow category
$\mathrm{Ar}(\mathcal{C})$ is also locally presentable and there exist
reflective localization functors $L_{\rm im}:
\mathrm{Ar}^\Box(\mathcal{C}) \to \mathrm{Ar}^{\rm im}(\mathcal{C}) $ by
the unit $\mathrm{id} \to \mathrm{im}$ and $L_{\rm coim}:
\mathrm{Ar}^\otimes(\mathcal{M}) \to \mathrm{Ar}^{\rm coim}(\mathcal{M})
$ by the counit $\mathrm{coim} \to \mathrm{id}$ such that the adjunction
\[ 
\mathrm{cok} :
\mathrm{Ar}^\Box(\mathcal{C}) \rightleftarrows \mathrm{Ar}^\otimes(\mathcal{C}):
\mathrm{ker}
\] 
induces categorical equivalences
 \[
  \mathrm{cok} : \mathrm{Ar}^{\rm im}(\mathcal{C})
 \rightleftarrows \mathrm{Ar}^{\rm coim}(\mathcal{C}): \mathrm{ker}.
\]
\end{proposition}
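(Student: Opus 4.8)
The plan is to prove Proposition~\ref{stabilization} in three stages: establish local presentability of $\mathrm{Ar}(\mathcal{C})$, construct the two reflective localizations, and then show the cokernel--kernel adjunction restricts to an equivalence on the localized subcategories.

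First I would record that $\mathrm{Ar}(\mathcal{C}) = \Fun([1],\mathcal{C})$ is locally presentable because $[1]$ is a small category and functor categories from a small category into a locally presentable category are again locally presentable (this is standard; e.g.\ Ad\'amek--Rosick\'y). Since $\mathcal{C}$ is abelian, so is $\mathrm{Ar}(\mathcal{C})$, with kernels and cokernels computed objectwise. Next I would identify $\mathrm{Ar}^{\rm im}(\mathcal{C})$ and $\mathrm{Ar}^{\rm coim}(\mathcal{C})$ explicitly: an object $f\colon R\to S$ lies in $\mathrm{Ar}^{\rm im}(\mathcal{C})$ iff the canonical factorization $R\to \mathrm{im}(f)\hookrightarrow S$ has first map an isomorphism, i.e.\ $f$ is a monomorphism in $\mathcal{C}$; dually $\mathrm{Ar}^{\rm coim}(\mathcal{C})$ consists of the epimorphisms. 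The endofunctor $\mathrm{im} = \mathrm{ker}\circ\mathrm{cok}$ on $\mathrm{Ar}(\mathcal{C})$ sends $f$ to its image inclusion, and $\mathrm{coim} = \mathrm{cok}\circ\mathrm{ker}$ sends $f$ to its coimage projection. To obtain $L_{\rm im}$ as a reflective localization I would check that $\mathrm{Ar}^{\rm im}(\mathcal{C})$ is closed under limits and accessible in $\mathrm{Ar}(\mathcal{C})$ (monomorphisms are detected by a small orthogonality/injectivity class since $\mathcal{C}$ is locally presentable), so the reflector exists by the adjoint functor theorem for locally presentable categories, and one verifies directly that the unit of this reflection is precisely $f \to \mathrm{im}(f)$; dually for $L_{\rm coim}$, using that $\mathrm{Ar}^{\rm coim}(\mathcal{C})$ is coreflective — here I must be careful that the statement is phrased with a \emph{counit} $\mathrm{coim}\to\mathrm{id}$, so $\mathrm{Ar}^{\rm coim}(\mathcal{C})$ is a colocalization of $\mathrm{Ar}^\otimes(\mathcal{C})$; the existence again follows from accessibility of the class of epimorphisms together with closure under colimits.

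Finally, for the equivalence, I would invoke Theorem~\ref{HoveySmith}: the adjunction $\mathrm{cok}\dashv\mathrm{ker}$ between $\mathrm{Ar}^\Box(\mathcal{C})$ and $\mathrm{Ar}^\otimes(\mathcal{C})$ is already given, and strong monoidality of $\mathrm{cok}$ is not needed for this part. The key computations are: for $f$ a monomorphism, the unit $f\to\mathrm{ker}(\mathrm{cok}(f))$ is an isomorphism by the fundamental isomorphism theorem in the abelian category $\mathcal{C}$ (a mono equals the kernel of its cokernel); dually, for $g$ an epimorphism, the counit $\mathrm{cok}(\mathrm{ker}(g))\to g$ is an isomorphism (an epi equals the cokernel of its kernel). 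Moreover $\mathrm{cok}$ sends monomorphisms to epimorphisms and $\mathrm{ker}$ sends epimorphisms to monomorphisms, so the adjunction restricts to $\mathrm{Ar}^{\rm im}(\mathcal{C})\rightleftarrows\mathrm{Ar}^{\rm coim}(\mathcal{C})$, and on these subcategories unit and counit are isomorphisms, giving the asserted categorical equivalence. Composing with $L_{\rm im}$ and $L_{\rm coim}$ identifies $\mathrm{cok}$ and $\mathrm{ker}$ with the localization/colocalization functors followed by this equivalence.

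The main obstacle I anticipate is not any single deep idea but the bookkeeping around accessibility: verifying that the full subcategories of monomorphisms and epimorphisms in $\mathrm{Ar}(\mathcal{C})$ are accessible and closed under the appropriate (co)limits so that the reflective (resp.\ coreflective) localizations genuinely exist with the claimed unit/counit. The abelian-category identities (mono = kernel of its cokernel, epi = cokernel of its kernel) are classical and will go through verbatim; the only care needed is to match them with Hovey's unit and counit so that $\mathrm{Ar}^{\rm im}$ and $\mathrm{Ar}^{\rm coim}$ as \emph{defined} via those (co)units coincide with the monomorphisms and epimorphisms, respectively.
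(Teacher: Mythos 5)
Your proposal is correct and follows essentially the same route as the paper: binormality of the abelian category identifies $\mathrm{Ar}^{\rm im}(\mathcal{C})$ with the monomorphisms (and $\mathrm{Ar}^{\rm coim}(\mathcal{C})$ with the epimorphisms), local presentability of the arrow category is quoted from Ad\'amek--Rosick\'y, and the identities $\mathrm{im}=\mathrm{ker}\circ\mathrm{cok}$ and $\mathrm{coim}=\mathrm{cok}\circ\mathrm{ker}$ together with mono $=$ kernel of its cokernel, epi $=$ cokernel of its kernel make the restricted adjunction an equivalence. The only difference is cosmetic: you route the existence of the (co)reflections through accessibility and the adjoint functor theorem, whereas the unit $f\to\mathrm{im}(f)$ (resp.\ counit $\mathrm{coim}(f)\to f$) already has the required universal property directly from the (epi, mono) factorization and the orthogonality of epis against monos, so the accessibility bookkeeping you worry about can be bypassed entirely.
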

\begin{proof}
Since any abelian category is binormal, $f \to \mathrm{im}(f)$ is an
isomorphism if and only if $f$ is a monomorphism.  By
Ad{\'a}mek--Rosick{\'y}~\cite[p.44, Corollary 1.5.4]{LocPresentable},
the arrow category is locally presentable, and it admits reflective
localization.  By the definitions of those functors: $\mathrm{im}=
\mathrm{ker} \circ \mathrm{cok}$ and $\mathrm{coim}= \mathrm{cok} \circ
\mathrm{ker}$, the restriction $ \mathrm{cok} : \mathrm{Ar}^{\rm
im}(\mathcal{C}) \to \mathrm{Ar}^{\rm coim}(\mathcal{C}) $ are
quasi-inverse functors. \qed
\end{proof}

\begin{definition}
 Let $\mathcal{C}$ be a locally presentable symmetric monoidal abelian
 category with a monoidal unit $V$. A non-unital monoid
 object of $\mathcal{C}$ is a Smith ideal $j:I \to R$ in the localized
 full subcategory $\mathrm{Ar}^{\rm im}(\mathcal{C})$ satisfying that
 the cokernel $\mathrm{cok}(j): R \to \mathrm{Coker}(j) $ is isomorphic
 with an argumentation $\varepsilon_R: R \to V$, and $\Alg^{\rm
 nu}(\mathcal{C})$ denote the full subcategory of
 $\Alg(\mathrm{Ar}^\Box(\mathcal{C}))$ spanned by non-unital
  algebra objects, where $\mathrm{Ar}^\Box(\mathcal{C})$
 denotes the arrow category of $\mathcal{C}$ whose monoidal structure is
 the push-out monoidal structure.
\end{definition}

\begin{proposition}
\label{lem-Im}
Further, let $\Alg^{\rm nu}(\mathcal{C})$ denote the full subcategory of
$\mathrm{Ar}^{\Box}(\mathcal{C})$ the full subcategory which is the
essential image of the restriction of the kernel functor on
$\Alg(\mathcal{C})_{V//V}$. Then any object $j:I \to R$ of $\Alg^{\rm nu}(\mathcal{C})$, the unit $j \to \mathrm{im}(j)$ is an isomorphism. 
\end{proposition}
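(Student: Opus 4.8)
The plan is to unwind the definitions and use the adjunction/equivalence of Proposition~\ref{stabilization}. By definition, an object $j : I \to R$ of $\Alg^{\rm nu}(\mathcal{C})$ arises as $\mathrm{ker}(\varepsilon_A : A \to V)$ for some augmented algebra object $\varepsilon_A \in \Alg(\mathcal{C})_{V//V}$. So it suffices to show that for any augmentation $\varepsilon_A : A \to V$, the kernel $\mathrm{ker}(\varepsilon_A)$ lies in $\mathrm{Ar}^{\rm im}(\mathcal{C})$, i.e.\ the unit $\mathrm{ker}(\varepsilon_A) \to \mathrm{im}(\mathrm{ker}(\varepsilon_A)) = \mathrm{ker}(\mathrm{cok}(\mathrm{ker}(\varepsilon_A)))$ is an isomorphism.

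First I would recall that in the abelian category $\mathcal{C}$, an arrow $f : X \to Y$ satisfies $f \xrightarrow{\sim} \mathrm{im}(f)$ precisely when $f$ is a monomorphism (this is exactly the binormality remark used in the proof of Proposition~\ref{stabilization}). Hence the claim reduces to: the underlying morphism of $\mathrm{ker}(\varepsilon_A)$ — namely the inclusion $\mathrm{Ker}(\varepsilon_A) \hookrightarrow A$ — is a monomorphism. But a kernel inclusion in an abelian category is always a monomorphism, so this is immediate. The only subtlety is bookkeeping about what ``the arrow'' of the object $\mathrm{ker}(\varepsilon_A) \in \mathrm{Ar}(\mathcal{C})$ actually is: by Theorem~\ref{HoveySmith} the kernel functor sends $(\varepsilon_A : A \to V)$, regarded in $\mathrm{Ar}^\otimes(\mathcal{C})$, to the object $(\mathrm{ker}(\varepsilon_A) : \mathrm{Ker}(\varepsilon_A) \to A)$ of $\mathrm{Ar}^\Box(\mathcal{C})$, whose underlying $[1]$-diagram is the canonical monomorphism $\mathrm{Ker}(\varepsilon_A) \to A$.

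Then I would assemble the argument: since $\mathrm{Ar}^{\rm im}(\mathcal{C})$ is by definition the full subcategory of arrows $f$ with $f \to \mathrm{ker}(\mathrm{cok}(f))$ an isomorphism, and since we have just shown the underlying arrow of any $j = \mathrm{ker}(\varepsilon_A)$ is monic hence equal to its own image, we conclude $j \in \mathrm{Ar}^{\rm im}(\mathcal{C})$ and the unit $j \to \mathrm{im}(j)$ is an isomorphism. One should also note the monoid structure carries along automatically: $\mathrm{ker}$ is lax symmetric monoidal (as recalled after Theorem~\ref{HoveySmith}), so it sends the monoid object $\varepsilon_A$ to a monoid object of $\mathrm{Ar}^\Box(\mathcal{C})$, and the localization $L_{\rm im}$ of Proposition~\ref{stabilization} restricts to monoid objects; but for the statement as phrased only the isomorphism on underlying arrows is needed.

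I do not expect a serious obstacle here; the content is essentially ``a kernel is a monomorphism'' combined with ``abelian categories are binormal.'' The one place to be careful is to make sure the identification $\mathrm{im} = \mathrm{ker}\circ\mathrm{cok}$ from Proposition~\ref{stabilization} is being applied to the correct arrow category (the object $j$ lives in $\mathrm{Ar}^\Box(\mathcal{C})$, its cokernel in $\mathrm{Ar}^\otimes(\mathcal{C})$, and $\mathrm{ker}$ of that back in $\mathrm{Ar}^\Box(\mathcal{C})$), so that the unit map $j \to \mathrm{im}(j)$ is the comparison arrow produced by the adjunction $(\mathrm{cok}, \mathrm{ker})$ of Theorem~\ref{HoveySmith} and not some other map. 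Once that is pinned down, the proof is a one-line consequence.
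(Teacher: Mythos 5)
Your proof is correct, and the underlying reduction is the same as the paper's: show that the underlying arrow $j$ is a monomorphism, so that by binormality of the abelian category the unit $j \to \mathrm{im}(j) = \mathrm{ker}(\mathrm{cok}(j))$ is an isomorphism. The difference is how monicity is obtained. The paper's one-line proof uses the section of the augmentation to split the sequence, producing an isomorphism $V \oplus I \to R$ and thereby exhibiting $j$ as a split monomorphism; you instead invoke only the general fact that a kernel inclusion in an abelian category is monic, never using the unit section at all. Your route is therefore marginally more general (it applies verbatim to the kernel of any arrow, augmented or not), while the paper's splitting makes explicit the connection with the classical $V \oplus A$ construction from the introduction. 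Your bookkeeping — that the essential image consists of arrows isomorphic to $\mathrm{Ker}(\varepsilon_A) \to A$, that the relevant unit is the comparison map of the $(\mathrm{cok}, \mathrm{ker})$ adjunction between $\mathrm{Ar}^\Box(\mathcal{C})$ and $\mathrm{Ar}^\otimes(\mathcal{C})$, and that the monoid structure is carried along by lax monoidality of $\mathrm{ker}$ but is not needed for the statement — is accurate.
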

\begin{proof}
By definition of the category $\Alg^{\rm nu}(\mathcal{C})$, one has an isomorphism $V \oplus I \to R$. The assertion is clear. \qed
\end{proof}
\begin{corollary}
Let $\mathcal{C}$ be a locally presentable symmetric monoidal abelian model category. The adjunction 
\[ 
\mathrm{cok} :
\mathrm{Ar}^\Box(\mathcal{C}) \rightleftarrows \mathrm{Ar}^\otimes(\mathcal{C}):
\mathrm{ker}
\] 
induces categorical equivalences 
  \[
 \mathrm{cok}:  \Alg^{\rm
 nu}(\mathcal{C}) \to  \Alg(\mathcal{C})_{V//V} : \mathrm{ker}.
\] \qed
\end{corollary}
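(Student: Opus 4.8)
The plan is to obtain the equivalence by lifting the adjunction of Theorem~\ref{HoveySmith} to monoid objects and then restricting it by means of Propositions~\ref{stabilization} and~\ref{lem-Im}. First I would note that, by Theorem~\ref{HoveySmith}, $\mathrm{cok}\colon\mathrm{Ar}^\Box(\mathcal{C})\to\mathrm{Ar}^\otimes(\mathcal{C})$ is strongly symmetric monoidal with lax symmetric monoidal right adjoint $\mathrm{ker}$, so that $(\mathrm{cok}\dashv\mathrm{ker})$ is a monoidal adjunction and hence lifts to an adjunction
\[
\mathrm{cok}\colon\Alg(\mathrm{Ar}^\Box(\mathcal{C}))\rightleftarrows\Alg(\mathrm{Ar}^\otimes(\mathcal{C}))\colon\mathrm{ker}
\]
on categories of monoid objects, whose unit and counit are given, on underlying arrows, by the unit $\mathrm{id}\to\mathrm{ker}\circ\mathrm{cok}$ and counit $\mathrm{cok}\circ\mathrm{ker}\to\mathrm{id}$ of the original adjunction. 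I would also recall, from the proof of Proposition~\ref{stabilization}, the identities $\mathrm{ker}\circ\mathrm{cok}=\mathrm{im}$ and $\mathrm{cok}\circ\mathrm{ker}=\mathrm{coim}$ and the abelian-category facts that $f\to\mathrm{im}(f)$ is invertible iff $f$ is monic and, dually, $\mathrm{coim}(g)\to g$ is invertible iff $g$ is epic; and that, since every algebra map to the unit $V$ is split by the unit of its source, $\Alg(\mathcal{C})_{V//V}$ consists precisely of the epic algebra morphisms onto $V$, viewed in $\Alg(\mathrm{Ar}^\otimes(\mathcal{C}))=\mathrm{Ar}(\Alg(\mathcal{C}))$.

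Next I would verify that the lifted functors restrict to the stated subcategories. If $j\colon I\to R$ is a non-unital algebra object then it lies in $\mathrm{Ar}^{\rm im}(\mathcal{C})$, so $j\cong\mathrm{ker}(\mathrm{cok}(j))$, and by definition $\mathrm{cok}(j)$ is isomorphic in $\Alg(\mathrm{Ar}^\otimes(\mathcal{C}))$ to an augmentation $R\to V$; hence $\mathrm{cok}$ carries $\Alg^{\rm nu}(\mathcal{C})$ into $\Alg(\mathcal{C})_{V//V}$. Conversely, for an augmented algebra $\varepsilon_A\colon A\to V$ the Smith ideal $\mathrm{ker}(\varepsilon_A)\colon\mathrm{Ker}(\varepsilon_A)\to A$ is monic, hence lies in $\mathrm{Ar}^{\rm im}(\mathcal{C})$, and $\mathrm{cok}(\mathrm{ker}(\varepsilon_A))=\mathrm{coim}(\varepsilon_A)\cong\varepsilon_A$ is an augmentation since $\varepsilon_A$ is epic; thus $\mathrm{ker}(\varepsilon_A)$ is a non-unital algebra object, which is exactly Proposition~\ref{lem-Im}.

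It then remains to check that the restricted unit and counit are natural isomorphisms. For $j\in\Alg^{\rm nu}(\mathcal{C})$ the unit component $j\to\mathrm{ker}(\mathrm{cok}(j))=\mathrm{im}(j)$ is invertible by Proposition~\ref{lem-Im}; for $\varepsilon_A\in\Alg(\mathcal{C})_{V//V}$ the counit component $\mathrm{cok}(\mathrm{ker}(\varepsilon_A))=\mathrm{coim}(\varepsilon_A)\to\varepsilon_A$ is invertible because $\varepsilon_A$ is epic. As a morphism of monoid objects that is invertible on underlying objects is an isomorphism of monoid objects, these are isomorphisms in $\Alg(\mathrm{Ar}^\Box(\mathcal{C}))$ and $\Alg(\mathrm{Ar}^\otimes(\mathcal{C}))$ respectively, so the restricted adjunction $\mathrm{cok}\colon\Alg^{\rm nu}(\mathcal{C})\rightleftarrows\Alg(\mathcal{C})_{V//V}\colon\mathrm{ker}$ is an adjoint equivalence.

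I do not expect a serious obstacle here: all the mathematical substance has already been extracted in Propositions~\ref{stabilization} and~\ref{lem-Im}, and the model structure on $\mathcal{C}$ plays no role. The only points requiring care are formal ones: that the monoidal adjunction really descends to monoid objects with unit and counit inherited from the ambient ones, so that their invertibility may be tested on underlying arrows; and that ``isomorphic with an augmentation'' in the definition of $\Alg^{\rm nu}(\mathcal{C})$ is read as an isomorphism of monoid objects of $\mathrm{Ar}^\otimes(\mathcal{C})$, which is precisely what ensures $\mathrm{cok}$ lands in $\Alg(\mathcal{C})_{V//V}$ rather than merely in $\Alg(\mathrm{Ar}^\otimes(\mathcal{C}))$.
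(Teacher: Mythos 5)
Your proposal is correct and follows essentially the route the paper intends: the corollary is stated without proof precisely because it is the combination of Theorem~\ref{HoveySmith} (the monoidal adjunction lifting to monoid objects) with Propositions~\ref{stabilization} and~\ref{lem-Im}, which is exactly what you carry out. Your write-up merely makes explicit the formal points the paper leaves implicit (the descent of the unit and counit to monoid objects and the epi/mono criteria for their invertibility), so no substantive difference.
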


\section{Non-unital commutative algebra objects of symmetric monoidal  model categories} 
A symmetric monoidal model category $\mathcal{M}$ is a model category
with a symmetric monoidal structure $ -\otimes - : \mathcal{M} \times
\mathcal{M} \to \mathcal{M} $ such that, for any object $M$ of
$\mathcal{M}$, those functors $(-) \otimes M$ and $M \otimes (-)$ are
left Quillen functors on $\mathcal{M}$. 

\subsection{The arrow categories of pointed symmetric monoidal model categories}

The category $\mathrm{Ar}(\mathcal{M})$ has two canonical model
structures, the {\it injective model structure} and the {\it projective
model structure} induced by $\mathcal{M}$'s:
\begin{definition}
Let $\mathcal{M}$ be a model category. The arrow category
$\mathrm{Ar}(\mathcal{M})$ has the following two model structures.
\begin{itemize}
 \item (Injective model structure) A morphism $\alpha:(f: X_0 \to X_1)
       \to (g: Y_0 \to Y_1) $ is a cofibrations (resp. weak equivalence)
       in $\mathrm{Ar}(\mathcal{M}) $ if and only if so is each
       $\mathrm{Ev}_i(\alpha)$ for $i= 0,\,1$.  Fibrations are morphisms
       with the right lifting property for all trivial cofibrations.
 \item (Projective model structure) A morphism $\alpha:(f: X_0 \to X_1)
       \to (g: Y_0 \to Y_1) $ is a fibrations (resp. weak equivalence)
       in $\mathrm{Ar}(\mathcal{M}) $ if and only if so is each
       $\mathrm{Ev}_i(\alpha)$ for $i= 0,\,1$.  Cofibrations are
       morphisms with the right lifting property for all trivial
       fibrations.
\end{itemize}
\end{definition}

In a pointed model category $\mathcal{M}$, we consider a
homotopically commutative diagram:
\[
 \xymatrix@1{ 
X \ar[r]^f \ar[d] & Y \ar[d]^g \\
0  \ar[r]  & Z.
}
\]
If the diagram is a homotopy Cartesian square, then $X$ is said to be a
homotopy {\it kernel} of $g$, and if it is a homotopy coCartesian
square, then $Z$ is a homotopy {\it cokernel} of $f$.  We can consider
homotopy image objects and homotopy coimage objects as additive
categories.

On the arrow category $\mathrm{Ar}(\mathcal{M})$, those functors
$\mathrm{cok}: \mathrm{Ar}(\mathcal{M}) \to \mathrm{Ar}(\mathcal{M})$
and $\mathrm{ker}: \mathrm{Ar}^\otimes(\mathcal{M}) \to
\mathrm{Ar}^\Box(\mathcal{M})$ are defined as follows: For a morphism $f : X
\to Y$ in $\mathcal{M}$, the arrow $\mathrm{cok}(f)$ is $Y \to
\mathrm{Coker}f$ and $\mathrm{ker}(f)$ $\mathrm{Ker}(f) \to X$. Then the
pair 
\[
 \mathrm{cok} : \mathrm{Ar}(\mathcal{M}) \rightleftarrows
\mathrm{Ar}(\mathcal{M}) : \mathrm{ker} 
\]
is a Quillen adjunction. 

\begin{definition}
Let $\mathcal{M}$ be a pointed symmetric monoidal model category. A {\it
Smith ideal} in $\mathcal{M}$ is a monoid object $j:I \to R$ in the
symmetric monoidal model category $\mathrm{Ar}^\Box(\mathcal{M})$ with
respect to the push-out product monoidal model structure.
\end{definition}

 We say that a Smith ideal $j:I \to R$ is {\it unit cokernel}
if the cokernel of $j$ is isomorphic to the monoidal unit object $V$ and
$\mathrm{cok}(j): R \to \mathrm{Coker}(j)$ is an augmentation of the
unit morphism $V \to R$.  Let $\Alg^{\rm nu}(\mathcal{C})$
denote the full subcategory of $\Alg(\mathrm{Ar}^\Box(\mathcal{C}))$ spanned
by unit cokernel Smith ideals. 
\begin{definition}
Let $\mathcal{M}$ be a stable symmetric monoidal model category with a
monoidal unit $V$ and $\Alg^{\rm nu}(\mathcal{M})$ denote the full
subcategory $\Alg(\mathrm{Ar}^\Box(\mathcal{M}))$ spanned by Smith
ideals whose cokernels are weakly equivalent to $V$. We say that an
object of $\Alg^{\rm nu}(\mathcal{M})$ is a {\it non-unital commutative
algebra object} of $\mathcal{M}$.
\end{definition}

\begin{theorem}
\label{mainThm}
Let $\mathcal{M}$ be a stable symmetric monoidal model category with a monoidal unit $V$. Then the Quillen equivalence
\[
  \mathrm{cok}:\mathrm{Ar}(\mathcal{M}) \rightleftarrows \mathrm{Ar}(\mathcal{M}) : \mathrm{ker}
\] 
induces a left Quillen equivalence between $\mathrm{cok}:\Alg^{\rm nu}(\mathcal{M}) \to \Alg(\mathcal{M})_{V//V}$. 
\end{theorem}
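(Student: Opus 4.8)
The plan is to leverage Hovey's Smith ideal theory (Theorem~\ref{HoveySmith}) together with its model-categorical refinement, \cite[Theorem 4.3]{Smith-ideals}, which already establishes that the cokernel--kernel adjunction $\mathrm{cok}:\mathrm{Ar}^\Box(\mathcal{M}) \rightleftarrows \mathrm{Ar}^\otimes(\mathcal{M}):\mathrm{ker}$ is a Quillen equivalence when $\mathcal{M}$ is stable. The strategy is to lift this Quillen equivalence of the underlying arrow categories to the categories of monoid objects, and then to cut down to the full subcategories picked out by the ``unit cokernel'' condition. Concretely, I would proceed in four steps.

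First, I would transfer model structures along the free--forgetful adjunctions to obtain model structures on $\Alg(\mathrm{Ar}^\Box(\mathcal{M}))$ and $\Alg(\mathrm{Ar}^\otimes(\mathcal{M}))$; here $\mathrm{Ar}^\otimes(\mathcal{M})$ with the tensor product is just $\mathrm{Ar}(\mathcal{M})$ with the levelwise monoidal structure, so $\Alg(\mathrm{Ar}^\otimes(\mathcal{M}))$ is the arrow category of $\Alg(\mathcal{M})$, and the unit-cokernel/augmentation condition identifies the relevant full subcategory with $\Alg(\mathcal{M})_{V//V}$. Second, since $\mathrm{cok}$ is strongly symmetric monoidal (Theorem~\ref{HoveySmith}) with lax symmetric monoidal right adjoint $\mathrm{ker}$, the adjunction is a monoidal Quillen adjunction, so it induces a Quillen adjunction $\mathrm{cok}:\Alg(\mathrm{Ar}^\Box(\mathcal{M})) \rightleftarrows \Alg(\mathrm{Ar}^\otimes(\mathcal{M})):\mathrm{ker}$ on monoid objects. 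Third, invoking stability of $\mathcal{M}$, I would argue that on the homotopy categories the derived unit $j \to \mathrm{im}(j)$ and derived counit $\mathrm{coim}(f) \to f$ are weak equivalences exactly on the image/coimage subcategories — this is the homotopical analogue of Proposition~\ref{stabilization} and Proposition~\ref{lem-Im}, using that in a stable setting a homotopy (co)Cartesian square is detected by its (co)fiber and that for a unit-cokernel Smith ideal $j:I\to R$ one has a homotopy pushout expressing $R \simeq V \oplus I$. Fourth, I would conclude that the derived functors restrict to inverse equivalences between the homotopy categories of $\Alg^{\rm nu}(\mathcal{M})$ and $\Alg(\mathcal{M})_{V//V}$, which combined with the Quillen adjunction of step two gives the asserted left Quillen equivalence.

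The main obstacle I expect is step three: controlling the derived unit and counit on the subcategories of monoid objects rather than on bare arrows. On the underlying arrow categories, \cite[Theorem 4.3]{Smith-ideals} already gives that the derived adjunction is an equivalence, but one must check that cofibrant-replacement in $\Alg(\mathrm{Ar}^\Box(\mathcal{M}))$ is compatible with the image-localization — i.e. that a cofibrant non-unital algebra object, viewed in $\mathrm{Ar}^\Box(\mathcal{M})$, still has the property that $j \to \mathrm{im}(j)$ is a weak equivalence, and dually for augmented algebras. Here stability is essential: it is what forces the image and coimage to agree up to weak equivalence (the homotopy category being triangulated, the natural map from coimage to image is an iso), so that the full subcategories $\mathrm{Ar}^{\rm im}$ and $\mathrm{Ar}^{\rm coim}$ become homotopically indistinguishable and the localizations of Proposition~\ref{stabilization} become trivial on homotopy categories. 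Once that is in place, the ``unit cokernel $\simeq V$'' condition is manifestly preserved by $\mathrm{cok}$ and reflected by $\mathrm{ker}$ (since $\mathrm{ker}(\varepsilon_R : R \to V)$ is the augmentation ideal and $\mathrm{cok}$ of that recovers $R \to V$), so the restriction of the Quillen equivalence to these full subcategories is automatic, and it remains only to observe that $\mathrm{cok}$ is the left adjoint, giving a \emph{left} Quillen equivalence as stated.
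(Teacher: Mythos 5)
Your proposal is correct and takes essentially the same route as the paper: both rest on Hovey's \cite[Theorem 4.3]{Smith-ideals} Quillen equivalence $\mathrm{cok} \dashv \mathrm{ker}$ in the stable case, lifted to monoid objects via the strong/lax symmetric monoidal structure of the adjunction (Theorem~\ref{HoveySmith}) and then restricted to unit-cokernel Smith ideals on one side and augmented algebra objects on the other. Your steps simply make explicit (transferred model structures on the monoid categories, the homotopical triviality of the image/coimage localization under stability, preservation of the unit-cokernel condition) what the paper's terse proof delegates to the second statement of Hovey's theorem and to ``by definition.''
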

\begin{proof}
By using the second statement of \cite[Theorem 4.3]{Smith-ideals}, the cokernel
functor $ \mathrm{cok}:\mathrm{Ar}(\mathcal{M}) \to
\mathrm{Ar}(\mathcal{M})$ induces the left Quillen functor
$\mathrm{cok}:\Alg^{\rm nu}(\mathcal{M}) \to
\Alg(\mathcal{M})_{V//V}$, being essentially surjective on the homotopy
categories by definition. Since the unit
$u:\mathrm{Id}_{\mathrm{Ar}(\mathcal{M})} \to \mathrm{ker} \circ
\mathrm{cok}$ is a weak equivalence by \cite[Theorem 4.3]{Smith-ideals} again, the induced functor 
$\mathrm{cok}:\Alg^{\rm nu}(\mathcal{M}) \to \Alg(\mathcal{M})_{V//V}
$ is homotopically fully faithful. \qed
\end{proof}

\begin{remark}
If $\mathcal{M}$ is a locally presentable stable symmetric monoidal model category, the presentable $\infty$-category of augmented algebra is represented by the model category $\Alg(\mathcal{M})_{V//V}$. Hence the left-hand-side $\Alg^{\rm nu}(\mathcal{M})$ is equivalent to the $\infty$-category defined by Lurie~\cite[p.949, Definition 5.4.4.9 and Proposition 5.4.4.10]{HA}.       
\end{remark}

\bibliographystyle{alphadin}

\nocite{Hirschhorn}
\nocite{Hoveybook}
\end{document}